\newtheorem{theorem}{Theorem}
\theoremstyle{definition} 
\newtheorem{example}[theorem]{Example}
\def\co{\colon\thinspace}
\DeclareMathOperator{\Leib}{Leib}
\begin{document}

\title{On Levi's Theorem for Leibniz algebras} 
\author{Donald W. Barnes}
\address{1 Little Wonga Rd, Cremorne NSW 2090 Australia}
\email{donwb@iprimus.com.au}

\subjclass[2010]{Primary 17A32}
\keywords{Leibniz algebras, Levi's Theorem}

\begin{abstract}  A Lie algebra over a field of characteristic $0$ splits over its soluble radical and all complements are conjugate.  I show that the splitting theorem extends to Leibniz algebras but that the conjugacy theorem does not.
\end{abstract}

\maketitle

Let $L$ be a finite-dimensional left Leibniz algebra over a field of characteristic $0$.  I denote by $d_a$ the left multiplication operator $d_a\co L \to L$ defined by $d_a(x) = ax$ for all $a, x \in L$.

I call the subspace $\langle x^2 \mid x \in L \rangle$ spanned by the squares of elements of $L$ the Leibniz kernel of $L$ and denote it $\Leib(L)$.  It is an abelian ideal of $L$, $L/\Leib(L)$ is a Lie algebra  and $\Leib(L)L = 0$.  Let $R =R(L)$ be the soluble radical of $L$.  Then $\Leib(L) \subseteq R$ .

\begin{theorem}  There exists a semi-simple subalgebra $S$ of $L$ such that $S + R = L$ and $S \cap R = 0$.
\end{theorem}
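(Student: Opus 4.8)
The plan is to separate the problem into its Lie-algebra part, handled by the classical Levi theorem, and a genuinely Leibniz part, namely splitting over the abelian ideal $\Leib(L)$. Since $\Leib(L) \subseteq R$ and $L/\Leib(L)$ is a Lie algebra, I would first pass to $\bar L = L/\Leib(L)$ and observe that $\bar R = R/\Leib(L)$ is the soluble radical of $\bar L$: it is soluble as a quotient of $R$, and the full preimage in $L$ of any soluble ideal of $\bar L$ is an extension of a soluble algebra by the abelian ideal $\Leib(L)$, hence soluble, hence contained in $R$. By the classical Levi theorem for Lie algebras there is a semisimple subalgebra $\bar S \le \bar L$ with $\bar L = \bar S \oplus \bar R$. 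Let $M$ be the full preimage of $\bar S$ in $L$; then $M$ is a subalgebra containing $\Leib(L)$, and taking preimages of the relations $\bar S \cap \bar R = 0$ and $\bar S + \bar R = \bar L$ gives $M \cap R = \Leib(L)$ and $M + R = L$, while $M/\Leib(L) = \bar S$.

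This reduces the theorem to a splitting over $\Leib(L)$ inside $M$. Indeed, suppose I can find a subalgebra $S \le M$ with $M = S \oplus \Leib(L)$ as vector spaces. Then $S \cong M/\Leib(L) = \bar S$ is semisimple, and $S + R \supseteq (S + \Leib(L)) + R = M + R = L$, so $S + R = L$; moreover $S \cap R = S \cap (M \cap R) = S \cap \Leib(L) = 0$, since $S \subseteq M$. Thus $S$ is the required complement, and everything hinges on the following core claim: \emph{a finite-dimensional Leibniz algebra $M$ with an abelian ideal $A = \Leib(L)$ satisfying $AM = 0$ and with $M/A$ a semisimple Lie algebra splits over $A$.} Note that $AM \subseteq \Leib(L)L = 0$ holds automatically from the hypotheses of the excerpt.

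To prove the core claim I would exploit $AM = 0$ to reduce the obstruction to Lie cohomology. Because the right action of $M$ on $A$ is trivial and $A$ is abelian, left multiplication makes $A$ a module over $M$ that factors through $M/A = \bar S$, so $A$ becomes a finite-dimensional module over the semisimple Lie algebra $\bar S$. Choosing a linear section $\sigma\co \bar S \to M$ of the projection, the deviation $f(x,y) = \sigma(x)\sigma(y) - \sigma(xy)$ takes values in $A$, and the Leibniz identity in $M$ translates into a cocycle condition on $f$; replacing $\sigma$ by $\sigma + g$ for a linear $g\co \bar S \to A$ alters $f$ by the coboundary of $g$. Hence it suffices to show that the relevant second cohomology of $\bar S$ with coefficients in $A$ vanishes, for then $S = (\sigma+g)(\bar S)$ is a subalgebra complementary to $A$.

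The main obstacle is exactly this vanishing, and it is where the Leibniz setting must be handled with care rather than quoting the Lie case verbatim. The point is that the extension $0 \to A \to M \to \bar S \to 0$ is a priori only a Leibniz extension, so the obstruction lives in Leibniz rather than Lie cohomology; I would use the triviality of the right action ($AM = 0$) to show that this obstruction is controlled by, and in fact reduces to, the ordinary Lie second cohomology $H^2(\bar S, A)$, which is zero by Whitehead's lemma for a semisimple Lie algebra over a field of characteristic $0$ acting on a finite-dimensional module. Verifying that the adjusted section yields a genuine Leibniz homomorphism whose image is closed under the non-anticommutative product, and confirming that the cohomological obstruction genuinely collapses onto the vanishing Lie group, is the delicate step on which the whole argument rests.
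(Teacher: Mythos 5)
Your first reduction coincides with the paper's: pass to $L/\Leib(L)$, apply the classical Levi theorem there, pull back, and reduce everything to splitting the preimage $M$ over the abelian ideal $A=\Leib(L)$, noting $AM=0$. The problem is the core claim, which you convert into a cohomological obstruction and then leave open. You assert that the obstruction ``reduces to the ordinary Lie second cohomology $H^2(\bar S,A)$, which is zero by Whitehead's lemma,'' but that reduction is exactly what does not follow from anything you have written, and your own final sentence concedes it is unverified. Concretely: the $2$-cochain $f(x,y)=\sigma(x)\sigma(y)-\sigma(xy)$ is not alternating --- its diagonal $f(x,x)=\sigma(x)^2$ is precisely the failure of $\sigma(\bar S)$ to be a Lie subalgebra and is in general nonzero --- so $f$ does not represent a class in the Chevalley--Eilenberg group $H^2(\bar S,A)$ at all. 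Likewise, since $AM=0$, replacing $\sigma$ by $\sigma+g$ changes $f$ by $(x,y)\mapsto x\cdot g(y)-g(xy)$, which is a Leibniz coboundary, not a Lie one (it too is nonzero on the diagonal). The obstruction therefore lives in the Leibniz cohomology $HL^2(\bar S,A)$ with $A$ carrying the zero right action, and its vanishing for semisimple $\bar S$ is a genuine theorem that the second Whitehead lemma does not hand you. Until that is proved, or the symmetric part of $f$ is otherwise disposed of, the argument has a hole at its crucial step.

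The paper avoids all of this with a different and much shorter second step. After the same reduction (it assumes $R=K=\Leib(L)$), it observes that $KL=0$ makes the left multiplications $a\mapsto d_a$ factor through the semisimple Lie algebra $L/K$, so all of $L$ is a completely reducible left $L/K$-module by Weyl's theorem (Jacobson's ``Whitehead's Theorem''); the submodule $K$ then has a module complement $S$, and $LS\subseteq S$ forces $SS\subseteq S$, so $S$ is already a subalgebra. That single use of complete reducibility of $L$ itself, rather than of a second cohomology group, is the idea your proposal is missing. If you want to keep your framework, the cleanest repair is to take $\sigma$ to be an $\bar S$-module section of $M\to\bar S$ (which exists by complete reducibility); its image is then automatically closed under the product, and the cocycle computation becomes unnecessary.
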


\begin{proof}  Put $K = \Leib(L)$.  By Levi's Theorem (see Jacobson \cite[Chapter III, p. ~91]{Jac}), there exists a semi-simple subalgebra $S^*/K$ of $L/K$ such $S^* + R = L$ and $S^* \cap R = K$.  It is sufficient to prove that $S^*$ splits over $K$, so we may suppose $R = K$.

Since $KL=0$, $L$ may be considered as a left module for the semi-simple Lie algebra $L/K$.  By Whitehead's Theorem (see Jacobson \cite[Chapter III, Theorem 8, p.79]{Jac}), this module is completely reducible.  Thus there exists a submodule $S$ complementing $K$.  Since $LS  \subseteq S$, we have $SS \subseteq S$ and $S$ is a subalgebra.
\end{proof}

\begin{example}  Let $S$ be a simple Lie algebra and let $K$ be isomorphic to $S$ as left $S$-module.  I denote by $x'$ the element of $K$ corresponding to $x \in S$ under this isomorphism.  I make $K$ into a Leibniz module by defining the right action to be $0$.  Let $L$ be the split extension of $K$ by $S$.  Then $L$ is a Leibniz algebra and $\Leib(L) = K$.  The space $S_1 =\{(s,s')\mid s \in S\}$ is another subalgebra complementing $K$ since, using the module isomorphism, we have
$$(s,s')(t,t') = (st, st') = (st, (st)').$$ 
For any $x = (s,k) \in L$, the inner derivation $d_x = d_s$, $d_x(S) \subseteq S$ and so also $\exp(d_x)(S) \subseteq S$.  Thus $S$ and $S_1$ are not conjugate.
\end{example}

\bibliographystyle{amsplain}

\end{document}